\title{
Luenberger observers for discrete-time nonlinear systems
}
\author{Lucas Brivadis, Vincent Andrieu and Ulysse Serres
}
\affil{The authors are with Univ. Lyon, Universit\'e Claude Bernard Lyon 1, CNRS, LAGEPP UMR 5007, 43 bd du 11 novembre 1918, F-69100 Villeurbanne, France
(e-mail: lucas.brivadis@gmail.com, vincent.andrieu@gmail.com, ulysse.serres@gmail.com)
}
\newcommand{\R}{\mathbb{R}}
\newcommand{\N}{\mathbb{N}\cup\{0\}} 
\newcommand{\C}{\mathbb{C}}
\newcommand{\X}{\mathcal{X}} 
\newcommand{\K}{\mathcal{K}} 
\newcommand{\RR}{\mathcal{R}}
\newcommand{\DD}{\mathcal{D}}
\renewcommand{\SS}{\mathcal{S}}
\renewcommand{\leq}{\leqslant}
\renewcommand{\geq}{\geqslant}
\renewcommand{\epsilon}{\varepsilon}
\renewcommand{\phi}{\varphi}
\newcommand{\norm}[1]{\left\Vert #1\right\Vert}
\newcommand{\dd}{\mathrm{d}}
\DeclareMathOperator{\diag}{diag}
\newcommand{\ie}{\emph{i.e.~}}
\newcommand{\xx}{\underline{x}}
\newcommand{\dt}{{dt}}
\newtheorem{assumption}{Assumption}
\newtheorem{theorem}{Theorem}
\newtheorem{remark}{Remark}
\newtheorem{lemma}{Lemma}
\newtheorem{definition}{Definition}
\newtheorem{proposition}{Proposition}
\begin{document}

\maketitle

\begin{abstract}
In this paper, we consider the problem of designing an asymptotic observer for a nonlinear dynamical system in discrete-time following Luenberger's original idea.
This approach is a two-step design procedure. In a first step, the problem is to estimate a function of the state.
The state estimation is obtained by inverting this mapping.
Similarly to the continuous-time context, we show that the first step is always possible provided a linear and stable discrete-time system fed by the output is introduced.
Based on a weak observability assumption, it is shown that picking the dimension of the stable auxiliary system sufficiently large, the estimated function of the state is invertible.
This approach is illustrated on linear systems with polynomial output.
The link with the Luenberger observer obtained in the continuous-time case is also investigated.
\end{abstract}

\section{Introduction}

\subsection{Context}

The design of observers for nonlinear discrete-time systems remains a challenging and open problem despite a burgeoning literature.
Since no universal method exists, several approaches have been developed. Most of them have first been developed for continuous-time systems, and then extended to the discrete case.
Some of them, such as the well-known \emph{extended Kalman filter} (\cite{Kalman1, Kalman2}), provide only a local convergence of the observer, and are based on a linearization of the system.
Others (as \cite{outputinj1} or \cite{outputinj2}) consist in applying an invertible change of coordinates that transforms the original system in an other form for which it is much more easier to design an observer.
Still others deal with Lipschitz nonlinear systems (\cite{lip1, lip2}, among others), that occur frequently in practice, and are based on linear matrix inequalities that provide Lyapunov functions for the error system.

A completely different idea is to try to reproduce the Luenberger's initial methodology originally developed for linear continuous-time system in \cite{Luenberger}, which differs from what is now usually called \emph{Luenberger observer}.
This path has been mapped in the case of discrete-time systems by N. Kazantzis and C. Kravaris in \cite{KK1}.
It consists to estimate first a function of the state, thanks to a linear stable system fed by the output, and then to inverse this mapping.
However, strong assumptions such as analyticity of the system and observablity of the linearized system are required, and the invertibility of the function is obtained only locally.

In the following, we relax those assumptions following the strategy developed in the continuous case in \cite{Andrieu} and later in \cite{Andrieu_SICON_2014} and \cite{Bernard}. We require the system to be time reversible, and replace the observability hypothesis of the linearized system by a backward distinguishability hypothesis on the nonlinear system itself. In so doing, we obtain the existence and the injectivity (not only locally) of such a function of the state.

This paper is organized as follows. In the next part of the introduction (Section \ref{secps}), we state our problem in a more precise way and introduce some notations and definitions. We also prove a first result that guarantees the existence of an observer as soon as there exists a continuous uniformly injective map satisfying some functional equation. 
Our main results can be found in Section \ref{secres}. We state sufficient conditions for the existence, injectivity and also unicity of such a map.
We provide in Section \ref{secex} some examples and applications of those results.
we examine linear systems with polynomial output and also discrete-time systems that approximate continuous-time systems.

Throughout the paper,
we denote by $|\cdot|$ the usual Euclidean norm and by $\|\cdot\|$ the induced matrix norm.

\subsection{Problem statement} \label{secps}

We consider the discrete-time system
\begin{equation}
    x_{k+1} = f(x_k),\qquad
    y_k = h(x_k),
\label{syst}
\end{equation}
with state $x\in\R^n$, output $y\in\R^p$ and suitable functions $f$ and $h$.
In this paper, we deal with the problem of existence of an observer for system \eqref{syst}.
We denote $X_k(x_0) = f^k(x_0)$ the value at time $k$ of the unique solution of system \eqref{syst} initialized at $x_0\in\R^n$, and $Y_k(x_0) = h(X_k(x_0))$ the corresponding output.
Let $\X_0 \subset \X \subset \R^n$  such that for all initial condition $x_0\in\X_0$ and all $k\in\N$, $X_k(x_0)\in\X$.
\begin{definition}\label{defobs}
Let $m$ be a positive integer, $\phi : \R^n\times \R^p \to \R^n$ and $\psi:\R^m\to\R^n$.
The discrete-time dynamical system given by
\begin{equation}
    \xi_{k+1} = \phi(\xi_k, y_k),\qquad
    \hat{x}_k = \psi(\xi_k),
\label{systobs}
\end{equation}
is called an observer for \eqref{syst} if and only if,
for all $(x_0, \xi_0)\in\X_0\times\R^m$,
the solution of the coupled system \eqref{syst}-\eqref{systobs}, denoted by $(X_k(x_0), \hat{X}_k(x_0, \xi_0))_{k\geq0}$,
satisfies
\begin{equation}  
    \lim_{k\to+\infty} \big|X_k(x_0) - \hat{X}_k(x_0, \xi_0) \big| = 0.
    \label{convobs}
\end{equation}
\end{definition}
Note that, even if $\hat{X}_k$ seems to depend directly of $x_0$, it is actually not the case. As \eqref{systobs} says, $\hat{X}_k$ depends only of the measurements $Y_0(x_0),Y_1(x_0),\dots,Y_{k-1}(x_0)$ through the dynamic of $(\xi_k)_{k\geq0}$.\\

We follow the Luenberger-like methodology in order to design an observer for system  \eqref{syst}. Let $m$ be a positive integer.
First, we try to transform \eqref{syst} into
\begin{equation}
\xi_{k+1} = A\xi_k + B y_k.
\label{systxi}
\end{equation}
with $A\in\R^{m\times m}$ a matrix with spectral radius $\rho(A)<1$ and $B\in\R^{m\times p}$. In order to do this, we look for a continuous map $T: \X \to \R^m$ such that, for any $x_0\in\X_0$ and any $k\in\N$,
\begin{equation}
T(X_{k+1}(x_0)) = A T(X_k(x_0)) + B Y_k(x_0).
\label{eqT0}
\end{equation}
Let $\Xi_k(x_0, \xi_0)$ denote the value at time $k$ of the unique solution of system \eqref{systxi} with initial condition $\xi_0\in\R^m$ and measurements $y_k = Y_k(x_0)$.
Note that, for any $(x_0, \xi_0)\in\X_0\times\R^m$,
\begin{align}
\Xi_{k+1}(x_0, \xi_0) - T(X_{k+1}(x_0))
= A ( \Xi_k(x_0, \xi_0) - T(X_k(x_0)))
\label{eqdif}\end{align}
and since $\rho(A)<1$, $ \Xi_k(x_0, \xi_0) - T(X_k(x_0))$ converges geometrically towards zero. Hence, implementing system \eqref{systxi}, one can deduce an approximation of $T(x_k)$ as $k$ goes to infinity.
Then, if $T$ is injective, one can estimate the state of system \eqref{syst}.
More precisely, we have the following theorem.
\begin{theorem}\label{thobs}
Let $m$ be a positive integer, $A\in\R^{m\times m}$ such that $\rho(A)<1$ and $B\in\R^{m\times p}$. Let $T: \X \to \R^m$ be a continuous map. Assume the following:
\begin{enumerate}
\item For all $x\in\X$, $T$ satisfies
\begin{equation}
T(f(x))=AT(x)+Bh(x).
\label{eqT}
\end{equation}
\item $T$ is uniformly injective, that is, there exists $\alpha$ a class $\K^\infty$ function such that for all $(x_1, x_2) \in\X^2$,
\begin{equation}
|x_1 - x_2| \leq \alpha(|T(x_1)-T(x_2)|.
\label{unifinj}
\end{equation}
\end{enumerate}
Then there exists a map $T^*: \R^m \to \R^n$ such that $(\hat{X}_k)_{k\geq0}$ defined by $\hat{X}_k(x_0, \xi_0) = T^*(\Xi_k(x_0, \xi_0))$ for all $(x_0, \xi_0)\in\X_0\times\R^m$ is the solution of an observer for \eqref{syst}.
\end{theorem}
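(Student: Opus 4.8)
The plan is to build the output map $T^*$ as a left inverse of $T$ defined on all of $\R^m$, and then to read off the observer by choosing $\phi(\xi,y)=A\xi+By$ in \eqref{systobs} (so that $\xi_k$ obeys \eqref{systxi}) and $\psi=T^*$. First I would exploit the uniform injectivity \eqref{unifinj}: it forces $T$ to be injective on $\X$, so the inverse $T^{-1}\colon T(\X)\to\X$ is well defined, and rewriting \eqref{unifinj} with $z_i=T(x_i)$ shows that $T^{-1}$ is uniformly continuous on $T(\X)$ with modulus $\alpha$, namely $|T^{-1}(z_1)-T^{-1}(z_2)|\le\alpha(|z_1-z_2|)$ for all $z_1,z_2\in T(\X)$. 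Since $\R^n$ is complete, this uniformly continuous map extends uniquely to a uniformly continuous map $\bar T^{-1}\colon \overline{T(\X)}\to\R^n$ with the same modulus $\alpha$.

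Next I would extend $\bar T^{-1}$ to all of $\R^m$. Rather than look for a globally uniformly continuous extension (which need not exist, see below), I would compose with a nearest-point projection: because $\overline{T(\X)}$ is a closed subset of $\R^m$ it admits, for every $z\in\R^m$, a point $\pi(z)\in\overline{T(\X)}$ realizing $\mathrm{dist}(z,\overline{T(\X)})$, and I set $T^*=\bar T^{-1}\circ\pi$. By construction $\pi$ fixes $\overline{T(\X)}$ pointwise, hence $T^*(T(x))=x$ for every $x\in\X$, which is exactly the property that $T^*$ inverts $T$ along trajectories.

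It then remains to verify \eqref{convobs}. Setting $e_k=\Xi_k(x_0,\xi_0)-T(X_k(x_0))$, equation \eqref{eqdif} gives $e_k=A^ke_0$, so $\rho(A)<1$ yields $|e_k|\to0$ geometrically. For each $k$ the point $\Xi_k$ lies within $|e_k|$ of $T(X_k)\in T(\X)$, whence $\mathrm{dist}(\Xi_k,\overline{T(\X)})\le|e_k|$ and therefore $|\pi(\Xi_k)-T(X_k)|\le 2|e_k|$. Since $\hat X_k=T^*(\Xi_k)=\bar T^{-1}(\pi(\Xi_k))$ and $X_k=\bar T^{-1}(T(X_k))$, with both arguments lying in $\overline{T(\X)}$, the modulus estimate gives $|X_k-\hat X_k|\le\alpha(2|e_k|)\to0$, which is precisely \eqref{convobs}.

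The main obstacle is the construction of $T^*$ on all of $\R^m$. The naive idea of extending the uniformly continuous inverse to a uniformly continuous map on $\R^m$ can fail, because the modulus $\alpha$ furnished by \eqref{unifinj} may grow faster than linearly, whereas any uniformly continuous map on the geodesic space $\R^m$ grows at most linearly. The projection device is what circumvents this, and the reason it suffices is that only the behaviour of $\alpha$ near $0$ is ever used: the perturbations $e_k$ vanish, so the large-distance growth of $\alpha$ is irrelevant. Everything else—injectivity, extension to the closure, and the geometric decay of $e_k$—is routine.
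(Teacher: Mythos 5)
Your proof is correct, and it follows the paper's overall architecture (same choice $\phi(\xi,y)=A\xi+By$, $\psi=T^*$, same geometric decay of $e_k=\Xi_k-T(X_k)$ via \eqref{eqdif}), but it diverges at the one genuinely delicate step: how to turn the uniformly continuous left inverse $T^{-1}\colon T(\X)\to\X$ into a map defined on all of $\R^m$. The paper invokes McShane's extension theorem to produce a global extension $T^*$ satisfying the same modulus inequality \eqref{eqinvT} on all of $(\R^m)^2$, and then uses the resulting global estimate \eqref{lemtech}. You instead extend $T^{-1}$ only to the closure $\overline{T(\X)}$ and precompose with a (possibly discontinuous, non-unique) nearest-point selection $\pi$ onto that closed set, estimating $|\pi(\Xi_k)-T(X_k)|\le 2|e_k|$ and concluding $|X_k-\hat X_k|\le\alpha(2|e_k|)\to0$. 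Your route is more elementary, and your side remark is a genuine point in its favour: McShane-type extension with an \emph{unchanged} modulus requires the modulus to be subadditive (equivalently, one passes to a concave majorant), which a general $\K^\infty$ function $\alpha$ need not admit globally (e.g.\ $\alpha(s)=s^2$); so the paper's appeal to \cite[Theorem 2]{McShane} is, as written, slightly too quick, whereas your projection device only ever uses $\alpha$ near the origin, where the issue disappears. What the paper's approach buys, when the modulus is suitably majorized, is a globally uniformly continuous $T^*$ and the clean pointwise estimate \eqref{lemtech} for every $\xi\in\R^m$; your $T^*$ is in general discontinuous off $\overline{T(\X)}$, but since Definition \ref{defobs} imposes no continuity on $\psi$ in \eqref{systobs}, this costs nothing for the statement being proved. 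The only housekeeping your write-up leaves implicit is that $\overline{T(\X)}$ is nonempty (the theorem is vacuous otherwise) and that the extension of $T^{-1}$ to the closure inherits the modulus $\alpha$ by continuity of $\alpha$ --- both immediate.
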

\begin{proof}
Clearly, \eqref{eqT} implies that \eqref{eqT0} is satisfied for all $x_0\in\X_0$ and all $k\in\N$.
Let $(x_0, \xi_0)\in\X_0\times\R^m$.
Since $\rho(A)<1$, it follows from \eqref{eqdif} that
\begin{equation}
\lim_{k\to+\infty} \Xi_k(x_0, \xi_0) - T(X_k(\xi_0)) = 0.
\end{equation}
From the uniform injectivity of $T$, there exists a pseudo-inverse $T^{-1}:T(\X)\to\R^n$ such that for all $x$  in $\X$ $T^{-1}(T(x))=x$ and for all $(\xi_1, \xi_2) \in T(\X)^2$,
\begin{equation}
|T^{-1}(\xi_1) - T^{-1}(\xi_2)| \leq \alpha(|\xi_1 - \xi_2|).
\label{eqinvT}
\end{equation}
According to \cite[Theorem 2]{McShane}, there exists a function $T^*: \R^m \to \R^n$, that is an extension to $\R^m$ of $T^{-1}$, satisfying \eqref{eqinvT} for all $(\xi_1, \xi_2) \in (\R^m)^2$. Hence,
\begin{equation}
|T^*(\xi) - x| \leq \alpha(|\xi - T(x)|)\quad \forall \xi\in\R^m,\ \forall x\in\X.
\label{lemtech}
\end{equation}
Thus $|T^*(\Xi_k(x_0, \xi_0)) - X_k(\xi_0)| \to 0$ as $k$ goes to infinity.
Setting $\phi : (\xi, y)\in\R^n\times\R^p\mapsto A\xi+By$ and $\psi = T^*$, it follows from the Definition \ref{defobs} that $(\hat{X}_k)_{k\geq0}$ defined by $\hat{X}_k(x_0, \xi_0) = T^*(\Xi_k(x_0, \xi_0))$ is the solution of an observer for \eqref{syst}.
\end{proof}
Then it is sufficient to prove the existence of a uniformly injective continuous map $T: \X \mapsto \R^m$ satisfying \eqref{eqT} for some positive integer $m$ in order to design an observer for \eqref{syst}. In the next section, we state sufficient conditions for the existence, injectivity, and also unicity of a continuous map $T$ solution of \eqref{eqT}.
\begin{remark}
Note that if $\X$ is a compact subset of $\R^n$, then every continuous injective map $T:\X\to\R^m$ is also uniformly injective in the sense of \eqref{unifinj}.
In the following, we are interested in the injectivity of $T$.
If uniform injectivity is required (for example to apply Theorem \ref{thobs}), then one must either assume $\X$ compact or prove the uniform injectivity by other means.
\end{remark}

\section{Results and comments}\label{secres}

\subsection{Existence of the transformation}

First, we are interested in the existence of a map $T$ satisfying \eqref{eqT}.
In \cite{Andrieu}, V. Andrieu and L. Praly have proved the existence of a so-called Kazantzis--Kravaris/Luenberger observer for continuous-time systems of the form
\begin{equation}
    \dot x = f(x),\qquad
    y = h(x).
\label{continu}
\end{equation}
We follow the same methodology and adapt it in the discrete case. We need to make some assumptions on the system.
\begin{assumption}\label{hyp0}
$f$ is invertible and $f^{-1}$ and $h$ are continuous.
\end{assumption}
\begin{assumption}\label{hypconst}
There exist four non-negative constants $C_1$, $C_2$, $C_1'$ and $C_2'$ such that,
for all $x\in\R^n,$
\begin{equation}
    |x|\leq C_1 + C_2 |f(x)|, \qquad
    |h(x)|\leq C_1' + C_2' |x|.
\end{equation}
\end{assumption}

\medskip
\begin{remark}\label{rmhyp}
Note that Assumptions \ref{hyp0} and \ref{hypconst} are satisfied in particular if $f$ is invertible and both $f^{-1}$ and $h$ are globally Lipschitz. We will use this remark in the next section about the injectivity of $T$.
\end{remark}

For all non-negative integer $i$, we denote $\circ$ the composition operator and
\begin{align*}
f^i = \underbrace{f\circ f\circ\dots\circ f}_{i \text{ times}},\qquad
f^{-i} = (f^{-1})^i.
\end{align*}

\begin{theorem}
\label{thexist}
Let $m$ be a positive integer, $A\in\R^{m\times m}$ a normal matrix such that $\rho(A)<\min\left\{1, {1}/{C_2}\right\}$ and $B\in\R^{m\times p}$.
Assume that Assumptions \ref{hyp0} and \ref{hypconst} are satisfied.
For all $x\in\X$, set
\begin{equation}
T(x) = \sum_{i=0}^{+\infty} A^i B h(f^{-(i+1)}(x)).
\label{defT}
\end{equation}
Then $T: \X \to \R^m$ is well defined, continuous, and satisfies \eqref{eqT}.
\end{theorem}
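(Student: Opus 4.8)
The plan is to establish the three claims in the order: convergence of the defining series (well-definedness), continuity, and then the functional equation, the last of which is a formal reindexing that is legitimized by the absolute convergence proved first.

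First I would bound the general term of \eqref{defT}. Since $A$ is normal, each power $A^i$ is normal as well, so the induced (spectral) norm coincides with the spectral radius, giving $\|A^i\| = \rho(A)^i$. Hence $|A^i B h(f^{-(i+1)}(x))| \le \rho(A)^i \|B\|\,|h(f^{-(i+1)}(x))|$, and by the second inequality of Assumption \ref{hypconst}, $|h(f^{-(i+1)}(x))| \le C_1' + C_2'\,|f^{-(i+1)}(x)|$. Everything therefore reduces to controlling the growth of $|f^{-(i+1)}(x)|$ in $i$. The key estimate comes from the first inequality of Assumption \ref{hypconst}, namely $|z| \le C_1 + C_2 |f(z)|$, applied with $z = f^{-(j+1)}(x)$: since $f(f^{-(j+1)}(x)) = f^{-j}(x)$, this yields the recursion $|f^{-(j+1)}(x)| \le C_1 + C_2\,|f^{-j}(x)|$. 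Iterating it produces a bound of the form $|f^{-j}(x)| \le C_2^{\,j}|x| + C_1 \sum_{\ell=0}^{j-1} C_2^{\,\ell}$, so that $|f^{-j}(x)|$ grows at most geometrically with ratio $C_2$. Substituting back, the $i$-th term of the series is dominated by a constant (affine in $|x|$) times $(\rho(A)C_2)^i$. Since $\rho(A) < 1/C_2$, we have $\rho(A)C_2 < 1$, and the series converges (absolutely) by comparison with a geometric series. This is the main computational step; I expect the only mild nuisance to be the bookkeeping in the iterated bound and the degenerate cases $C_2 = 0$ or $C_2 = 1$, none of which threatens convergence.

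Continuity then follows almost immediately: each map $x \mapsto A^i B h(f^{-(i+1)}(x))$ is continuous because $f^{-1}$ and $h$ are continuous by Assumption \ref{hyp0}, and the geometric majorant obtained above is uniform in $x$ on any bounded neighborhood of a given point (there $|x|$ is bounded). Thus the series converges locally uniformly, and $T$ is continuous as a locally uniform limit of continuous maps.

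Finally, for the functional equation I would substitute $f(x)$ into \eqref{defT} and use $f^{-(i+1)}(f(x)) = f^{-i}(x)$ to get $T(f(x)) = \sum_{i=0}^{+\infty} A^i B h(f^{-i}(x))$. Isolating the $i=0$ term gives $Bh(x)$, and reindexing the remainder by $j = i-1$ factors out one power of $A$, yielding $\sum_{i=1}^{+\infty} A^i B h(f^{-i}(x)) = A\sum_{j=0}^{+\infty} A^j B h(f^{-(j+1)}(x)) = A\,T(x)$. Hence $T(f(x)) = A\,T(x) + B h(x)$, which is exactly \eqref{eqT}; the absolute convergence established in the first step is what justifies splitting and reindexing the series.
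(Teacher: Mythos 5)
Your proposal is correct and follows essentially the same route as the paper: the normality of $A$ gives $\|A^i\|=\rho(A)^i$, the iterated bound $|f^{-j}(x)|\le C_2^j|x|+C_1\sum_{\ell=0}^{j-1}C_2^\ell$ reproduces the paper's domination \eqref{eqdom}, and the conditions $\rho(A)<1$, $\rho(A)C_2<1$ yield convergence, continuity (the paper phrases the locally uniform convergence via dominated convergence on compact sets, you via a Weierstrass-type majorant, which is the same argument), and the same reindexing for \eqref{eqT}.
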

\begin{proof}
For all $x\in\X$ and all non-negative integer $i$, let $a_i(x) = A^i B h(f^{-(i+1)}(x))$.
According to Assumption \ref{hyp0}, each $a_i$ is continuous on $\X$.
Note that, since $A$ is normal, $\rho(A) = \norm{A}$. Then, according to Assumption \ref{hypconst}, we have for all $x\in\X$
\begin{align}
|a_i(x)| \leq 
\rho(A)^i \|B\|
\Bigg(C_1' + C_2'\Bigg(C_2^{i+1}|x| + C_1\sum_{j=0}^iC_2^j\Bigg)\Bigg).
\label{eqdom}
\end{align}
Since $\rho(A) < 1$ and $\rho(A)C_2<1$ the Lebesgue dominated convergence theorem applied on any compact set implies that \eqref{defT} defines a continuous function.
Moreover, for any $x\in\X$,
\begin{align*}
T(f(x))
&= \sum_{i=0}^{+\infty} A^i B h(f^{-(i+1)}(f(x)))\\
&= A \sum_{i=0}^{+\infty} A^{i-1} B h(f^{-i}(x))\\
&= A \sum_{i=0}^{+\infty} A^{i} B h(f^{-(i+1)}(x)) + Bh(x)\\
&= A T(x) + B h(x),
\end{align*}
which shows that $T$ satisfies \eqref{eqT}.
\end{proof}

\subsection{Injectivity with backward distinguishability}

In order to obtain that $T$ defined by \eqref{defT} is injective, we introduce the following \emph{backward distinguishability} assumption on the system.
\begin{assumption}
For all $(x_1, x_2)\in\X^2$, if $x_1\neq x_2$, then there exists a positive integer $i$ such that  $h(f^{-i}(x_1)) \neq  h(f^{-i}(x_2))$.
\label{hypback}
\end{assumption}
We also need stronger hypothesis on the system than in the previous section.
\begin{assumption}\label{hyp1}
$f$ is invertible and $f^{-1}$ and $h$ are of class $C^1$ and globally Lipschitz.
\end{assumption}
According to the Remark \ref{rmhyp}, if Assumption \ref{hyp1} holds, then Assumptions \ref{hyp0} and \ref{hypconst} are satisfied.
We denote by $I_k$ the identity $k\times k$ matrix, by $\otimes$ the Kronecker product and by $A^*$ the conjugate transpose matrix of $A$.

\begin{theorem}\label{thinj}
Let Assumptions \ref{hypback} and \ref{hyp1} hold.
Let $m = (n+1)p$ and $B = (1,\dots,1)^* \otimes I_p \in\C^{m\times p}$.
Let $C_2 = \sup\{|(f^{-1})'(x)|, x\in\X\}$ and
$\DD$ be the open disc of $\C$ of radius $\min\left\{1, {1}/{C_2}\right\}$.
Then there exists a subset $\RR \subset \DD^{n+1}$ of zero Lebesgue measure in $\C^{n+1}$ such that, for any
$(\lambda_1, \dots \lambda_{n+1}) \in \DD^{n+1}\setminus \RR$,
the matrix $A = \diag(\lambda_1,\dots,\lambda_{n+1}) \otimes I_p \in\C^{m\times m}$ is such that the map $T:\X\to\C^m$ defined by \eqref{defT} is well-defined, of class $C^1$ and one-to-one.
\end{theorem}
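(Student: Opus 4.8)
The plan is to establish the three claims---well-definedness, the $C^1$ regularity, and injectivity---in that order, the last being by far the substantial one. For the first two, I would simply build on Theorem \ref{thexist}. Since Assumption \ref{hyp1} implies Assumptions \ref{hyp0} and \ref{hypconst} by Remark \ref{rmhyp}, and since $A=\diag(\lambda_1,\dots,\lambda_{n+1})\otimes I_p$ is normal with $\rho(A)=\max_j|\lambda_j|<\min\{1,1/C_2\}$, Theorem \ref{thexist} already yields that $T$ is well-defined, continuous, and satisfies \eqref{eqT}. To upgrade continuity to $C^1$ I would differentiate \eqref{defT} term by term: writing the $j$-th block as $T_{\lambda_j}(x)=\sum_i\lambda_j^i h(f^{-(i+1)}(x))$, the chain rule together with the global Lipschitz bounds on $h$ and $f^{-1}$ dominates the $i$-th term of the differentiated series by a constant times $(|\lambda_j|C_2)^i$, which is summable because $|\lambda_j|C_2<1$ on $\DD$. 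Local uniform convergence of the differentiated series then gives $T\in C^1$, jointly regular in $(x,\lambda)$.

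For injectivity, the first step is to exploit the block structure. Because $A$ and $B$ are built via $\otimes I_p$, the map $T$ decouples into the $n+1$ blocks $T_{\lambda_1},\dots,T_{\lambda_{n+1}}$, so that $T(x_1)=T(x_2)$ is equivalent to $G(x_1,x_2,\lambda_j)=0$ for every $j$, where $G(x_1,x_2,\lambda):=\sum_{i=0}^{+\infty}\lambda^i\delta_i(x_1,x_2)\in\C^p$ and $\delta_i(x_1,x_2):=h(f^{-(i+1)}(x_1))-h(f^{-(i+1)}(x_2))$. Fixing $(x_1,x_2)$ with $x_1\neq x_2$, the geometric bound $|\delta_i|\leq L_h C_2^{i+1}|x_1-x_2|$ shows that $\lambda\mapsto G(x_1,x_2,\lambda)$ is holomorphic on $\DD$, while Assumption \ref{hypback} forces some $\delta_{i_0}\neq0$, hence at least one scalar component of $G(x_1,x_2,\cdot)$ is a nonzero holomorphic function. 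Its zeros are therefore isolated, so the common zero set $Z_{x_1,x_2}=\{\lambda\in\DD\mid G(x_1,x_2,\lambda)=0\}$ is at most countable, and in particular of zero Lebesgue measure in $\C$.

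The crux is then a dimension count. Set $W=\{(x_1,x_2)\in\X^2\mid x_1\neq x_2\}\subset\R^{2n}$ and note that the bad set $\RR=\{\lambda\in\DD^{n+1}\mid T\text{ is not injective}\}$ equals $\pi_\lambda(P)$, where $P=\{(x_1,x_2,\lambda)\in W\times\DD^{n+1}\mid \lambda_j\in Z_{x_1,x_2}\ \forall j\}$ and $\pi_\lambda$ is the projection onto the $\lambda$-coordinates. The set $P$ fibers over the $2n$-dimensional base $W$ with the at-most-countable fibers $(Z_{x_1,x_2})^{n+1}$, so it is morally $2n$-dimensional, whereas the target $\DD^{n+1}$ has real dimension $2n+2$; the image of such a set under the $C^1$ map $\pi_\lambda$ must be null. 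This is exactly where $m=(n+1)p$, i.e.\ $n+1$ eigenvalues, enters: the margin $2n+2>2n$ is the minimal one possible. To make this rigorous I would cover the zero set $\hat S=\{(x_1,x_2,\lambda)\mid G(x_1,x_2,\lambda)=0\}$ by countably many pieces, on each of which $\lambda$ is a $C^1$ function $\eta$ of $(x_1,x_2)$: stratifying by the order of the zero and applying the implicit function theorem to the first $\lambda$-derivative of a component of $G$ that does not vanish. On the fiber product this writes $P$ as a countable union of graphs of maps $(x_1,x_2)\mapsto(\eta_1,\dots,\eta_{n+1})$ from subsets of $\R^{2n}$; composing with $\pi_\lambda$ gives countably many Lipschitz maps from $\R^{2n}$ into $\R^{2n+2}$, each with null image. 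Hence $\RR$ has zero Lebesgue measure and $T$ is injective for every $\lambda\in\DD^{n+1}\setminus\RR$.

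The main obstacle is precisely the rigorous control of $\hat S$ in this last step. The fibering-with-countable-fibers picture is intuitively clear, but converting it into a genuine measure-zero statement requires handling zeros of arbitrary multiplicity and the degeneracy of $\partial_\lambda G$, so the stratification and implicit-function argument must be executed with care, and one must verify the resulting selections $\eta$ are Lipschitz so that images of $2n$-dimensional sets into $\R^{2n+2}$ are indeed null. Everything else---well-definedness, the $C^1$ upgrade, holomorphy in $\lambda$, and the reduction via backward distinguishability to a nonzero holomorphic function---is routine given Theorem \ref{thexist} and Assumptions \ref{hypback} and \ref{hyp1}.
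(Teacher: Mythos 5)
Your proposal is correct, and for everything the paper actually proves in the body of its argument it follows the same route: decompose $T$ into the blocks $T_{\lambda_j}$, get well-definedness and the $C^1$ property from the geometric domination $|\lambda|^iC_2'C_2^{i+1}$ (the paper redoes this directly rather than quoting Theorem~\ref{thexist}, but the content is identical), form the difference function $\Delta T(x_1,x_2,\lambda)$, check it is holomorphic in $\lambda$ and $C^1$ in $(x_1,x_2)$, and use Assumption~\ref{hypback} together with uniqueness of power series coefficients to see that $\Delta T(x_1,x_2,\cdot)\not\equiv 0$ when $x_1\neq x_2$. The genuine difference is the last step: the paper at that point invokes Lemma~\ref{lemcoron} (the Andrieu--Praly/Coron lemma) as a black box to conclude that the bad set $\RR$ is Lebesgue-null in $\C^{n+1}$, whereas you sketch a direct proof of that lemma. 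Your sketch is essentially the right one and can be completed: since $\DD$ is connected and $\Delta T(x_1,x_2,\cdot)$ is holomorphic and not identically zero, every zero has finite order $k$, so the zero set is exhausted by the strata $\{G=\partial_\lambda G=\dots=\partial_\lambda^{k-1}G=0,\ \partial_\lambda^{k}G\neq 0\}$; on the $k$-th stratum one applies the implicit function theorem to $\partial_\lambda^{k-1}G_j$ for a component $j$ with $\partial_\lambda^{k}G_j\neq 0$ (note: not to ``the first $\lambda$-derivative that does not vanish'' --- you solve the last one that \emph{does} vanish, its own $\lambda$-derivative being nonzero and the real Jacobian in $(\Re\lambda,\Im\lambda)$ having determinant $|\partial_\lambda^{k}G_j|^2$), which locally traps the stratum inside a $C^1$ graph $\lambda=\eta(x_1,x_2)$. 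Your worry about Lipschitz regularity of the selections is not an obstruction: each $\eta$ is $C^1$ on an open set, hence locally Lipschitz, and a countable exhaustion by compacts reduces everything to countably many Lipschitz images of subsets of $\R^{2n}$ in $\R^{2n+2}$, each of Hausdorff dimension at most $2n$ and therefore null; this is exactly where $m=(n+1)p$ is needed, as you observe. In short, your argument is a self-contained (if only sketched) proof of the cited lemma rather than an appeal to it; what the paper's route buys is brevity, and what yours buys is transparency about why $n+1$ eigenvalues suffice.
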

\begin{proof}
Let  $(\lambda_1, \dots \lambda_{n+1}) \in \DD^{n+1}$ and $A = \diag(\lambda_1,\dots,\lambda_{n+1}) \otimes I_p \in\C^{m\times m}$. Let $T:\X\to\C^m$ be defined as in \eqref{defT}.
For all $\lambda \in\DD$, let
\begin{equation}
T_\lambda(x) = \sum_{i=0}^{+\infty} \lambda^i h(f^{-(i+1)}(x)),\quad \forall x\in\X.
\label{defTl}
\end{equation}
Let $a_i(x) = \lambda^i h(f^{-(i+1)}(x))$ for all $x\in\X$. Then each $a_i$ is of class $C^1$ on $\X$ by Assumption \ref{hyp1}, and we have the following domination:
\begin{equation*}
\left|a_i'(x)\right| \leq \lambda^i C_2' C_2^{i+1}
\end{equation*}
with $C_2' = \sup\{|h'(x)|, x\in\X\}$. Moreover, $\lambda C_2 < 1$.
So the Lebesgue dominated convergence theorem implies that for each $\lambda\in\DD$, $T_\lambda:\X\to\C^p$ is well-defined and of class $C^1$.
Considering the structure of $A$ and $B$, remark that up to a permutation of coordinates we have
\begin{equation*}
    T(x) = \left(T_{\lambda_1}(x),\dots,T_{\lambda_{n+1}}(x)\right)^*
\end{equation*}
It is sufficient to prove that $T:\X\to\C^p$ is one-to-one for almost all $(\lambda_1,\dots,\lambda_{n+1})\in\DD^{n+1}$.

In order to do this, we need the following lemma, established by L. Praly and V. Andrieu in \cite[Lemma 1]{Andrieu}, which is a modified version of \cite[Lemma 3.2]{Coron} due to J.-M. Coron.

\begin{lemma}\label{lemcoron}
Let $\DD$ and $\Gamma$ be open subsets of $\C$ and $\R^{2n}$, respectively. Let $g:\Gamma\times\DD\to\C^p$ be a function which is holomorphic in $\lambda$ for each $\xx\in\Gamma$ and $C^1$ in $\xx$ for each $\lambda\in\DD$.
If for each $\xx\in\Gamma$, the function $\lambda\in\DD\mapsto g(\xx, \lambda)$ is not constantly zero,
then the set
\begin{align}
\RR = 
\bigcup_{\xx\in\Gamma} \Big\{
(\lambda_1,\dots,\lambda_{n+1}) \in \DD^{n+1}\ \big|\
\forall i\in\{1,\dots,n+1\},\ g(\xx, \lambda_i) = 0 \Big\}
\end{align}
has zero Lebesgue measure in $\C^{n+1}$.
\end{lemma}
We apply this lemma to $\Gamma = \{(x_1, x_2)\in\X^2\ |\ x_1\neq x_2\}$ and $g = \Delta T$ defined as follows:
\begin{equation}
\Delta T : (x_1, x_2, \lambda)\in\X^2\times\DD \mapsto T_\lambda(x_1) - T_\lambda(x_2)
\label{defdt}
\end{equation}
Clearly, $\Delta T(x_1, x_2, \cdot)$ is holomorphic on $\DD$ for each $(x_1, x_2)\in\X^2$ and $\Delta T(\cdot, \lambda)$ is of class $C^1$ on $\X^2$ for each $\lambda\in\DD$. Fix $(x_1, x_2)\in\Gamma$. Now, we prove that $\Delta T(x_1, x_2, \cdot)$ is not identically zero on $\DD$. Assume the contrary. By unicity of the power series expansion, we get that for all positive integer $i$,
\begin{equation}
h(f^{-i}(x_1)) = h(f^{-i}(x_2))
\end{equation}
According to the \emph{backward distinguishability} Assumption \ref{hypback}, it implies that $x_1=x_2$ which is contradictory with the fact that $(x_1, x_2)\in\Gamma$. Hence, $\Delta T(x_1, x_2, \cdot)$ is not identically zero on $\DD$.

Since $\DD$ is a convex subset of $\C$ and $\Delta T(x_1, x_2, \cdot)$ is holomorphic, its zero are isolated and with finite multiplicity. Hence the hypotheses of Lemma \ref{lemcoron} are satisfied. Thus, $\RR\subset\DD^{n+1}$ has zero Lebesgue measure and for all $(\lambda_1,\dots,\lambda_{n+1})\in\DD^{n+1}\setminus \RR$, $T$ is injective by definition of $\Delta T$.
\end{proof}
\begin{remark}\label{rmreal}
The function $T$ and the matrices $A$ and $B$ defined Theorem \ref{thinj} take complex values while previous Theorems \ref{thobs} and \ref{thexist} remain in the real frame. However, one can choose two different ways to bridge this gap.
\begin{itemize}
    \item State Theorems \ref{thobs} and \ref{thexist} in the complex frame. The proofs remain identical. One should simply change the domains and codomains of $f$ and $h$.
    \item Instead of considering $A = \diag(\lambda_1,\dots,\lambda_{n+1}) \otimes I_p \in\C^{m\times m}$ and $B = (1,\dots,1)^* \otimes I_p \in\C^{m\times p}$, one should either consider $\tilde{A} = \diag(\Lambda_1,\dots,\Lambda_{n+1}) \otimes I_p \in\R^{2m\times 2m}$ and $\tilde{B} = (\mathbb{I},\dots,\mathbb{I})^* \otimes I_p \in\R^{2m\times p}$, where
    \begin{align*}
        \Lambda_i = \begin{pmatrix}
        \Re(\lambda_i) &  -\Im(\lambda_i)\\
        \Im(\lambda_i) & \Re(\lambda_i)
        \end{pmatrix},\qquad
        \mathbb{I} = \begin{pmatrix}
        1& 0
        \end{pmatrix}.
    \end{align*}
    Then for all real sequence of measurements $(y_k)_{k\geq0}$ the solutions of $\tilde{\xi}_{k+1} = \tilde{A}\tilde{\xi}_k+\tilde{B}y_k$ contain the real and imaginary parts of the solutions of $\xi_{k+1} = A\xi_k+By_k$.
\end{itemize}
\end{remark}

\subsection{Unicity}

One can also wonder in which cases does the unicity of $T$ satisfying \eqref{eqT} holds. More than a theoretical question, this fact may be useful in practice in order to obtain the injectivity of $T$. Most of the time, the function $T$ given by \eqref{defT} is difficult to compute. Since the matrix $A$ has spectral radius strictly inferior to $1$, an approximation of $T$ is given by
\begin{equation}
    T_N(x) = \sum_{i=0}^{N} A^i B h(f^{-(i+1)}(x)),\quad \forall x\in\X.
\end{equation}
for all $N\geq0$. Then $|T(x)-T_n(x)|\to 0$ as $N\to+\infty$. However, if $f$ and $h$ have more properties (for example if $f$ is linear and $h$ is polynomial, see Section \ref{seclinpol}), there may exist another solution $\tilde{T}$ of \eqref{eqT} much more easier to compute than $T$. Then, the question of the injectivity of that new $\tilde{T}$ remains open \emph{a priori}. But if \eqref{eqT} has a unique solution for $A$ and $B$ complex matrices chosen has in Theorem \ref{thinj}, then $T = \tilde{T}$ and hence $\tilde{T}$ is injective.
Now, we state our unicity theorem.

\begin{theorem}\label{thunic}
Let $m$ be a positive integer, $A\in\R^{m\times m}$ such that $\rho(A)<1$ and $B\in\R^{m\times p}$. Let Assumption \ref{hyp0} hold and make the following backward stability hypothesis on $\X$:
\begin{equation}
\forall x\in\X,\ \forall i\geq1,\quad f^{-i}(x) \in\X.
\label{backstab}
\end{equation}
Assume also that $\X$ is compact.
Then there exists one and only one continuous function $T: \X \to \R^m$ that satisfy \eqref{eqT} for all $x\in\X$.
\end{theorem}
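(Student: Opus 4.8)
The plan is to show that \emph{existence} follows from the earlier work and that \emph{uniqueness} reduces to a contraction/geometric-decay argument on the difference of two solutions. For existence, I would first check that the hypotheses here imply those of Theorem \ref{thexist}. Indeed, Assumption \ref{hyp0} is assumed directly, and the backward-stability condition \eqref{backstab} together with compactness of $\X$ let me produce the growth constants of Assumption \ref{hypconst}: on the compact set $\X$ the continuous map $h$ is bounded and $f^{-1}$ maps $\X$ into $\X$, so the series \eqref{defT} converges on $\X$ and defines a continuous $T$ satisfying \eqref{eqT}. (One caveat: Theorem \ref{thexist} requires $\rho(A)<\min\{1,1/C_2\}$, whereas here only $\rho(A)<1$ is assumed; I expect the compactness of $\X$ to remove the $1/C_2$ constraint, since the relevant iterates $f^{-i}(x)$ stay in the bounded set $\X$ rather than escaping to infinity, so no amplification by $C_2^i$ occurs.) This handles existence.

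For uniqueness, suppose $T_1$ and $T_2$ are two continuous solutions of \eqref{eqT} on $\X$, and set $\Delta = T_1 - T_2$. Subtracting the two copies of \eqref{eqT}, the inhomogeneous term $Bh(x)$ cancels and I obtain the homogeneous relation
\begin{equation}
\Delta(f(x)) = A\,\Delta(x),\qquad \forall x\in\X.
\label{planhom}
\end{equation}
The key step is to iterate \eqref{planhom} \emph{backward}: replacing $x$ by $f^{-i}(x)$, which is legitimate precisely because \eqref{backstab} guarantees $f^{-i}(x)\in\X$, gives $\Delta(x) = A\,\Delta(f^{-1}(x)) = \dots = A^i\,\Delta(f^{-i}(x))$ for every $i\ge 1$. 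Now I use compactness: $\Delta$ is continuous on the compact set $\X$, hence bounded, say $|\Delta(z)|\le M$ for all $z\in\X$, and all the backward iterates $f^{-i}(x)$ lie in $\X$, so $|\Delta(f^{-i}(x))|\le M$ uniformly in $i$. Therefore
\begin{equation}
|\Delta(x)| = |A^i\,\Delta(f^{-i}(x))| \le \|A^i\|\,M \cvl{i\to\infty} 0,
\label{plandecay}
\end{equation}
since $\rho(A)<1$ forces $\|A^i\|\to 0$. As the left-hand side does not depend on $i$, this yields $\Delta(x)=0$ for every $x\in\X$, i.e. $T_1=T_2$.

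The main obstacle is bookkeeping the two hypotheses rather than any deep difficulty: I must be careful that the backward-stability condition \eqref{backstab} is exactly what licenses substituting $f^{-i}(x)$ into the relation while staying in the domain $\X$, and that compactness is what upgrades the mere continuity of $\Delta$ into the \emph{uniform} bound $M$ needed to kill the product $\|A^i\|\,M$. (Without compactness, $|\Delta(f^{-i}(x))|$ could in principle grow as fast as $\|A^i\|$ shrinks, and the argument would fail.) If one only wanted $\rho(A)<1$ together with $\|A^i\|\to 0$, the normality of $A$ used elsewhere is not needed here, since $\rho(A)<1$ already gives $\|A^i\|\to 0$ for any matrix by Gelfand's formula.
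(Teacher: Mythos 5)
Your proposal is correct and follows essentially the same route as the paper: uniqueness via the homogeneous relation $\Delta(f(x))=A\Delta(x)$, backward iteration $\Delta(x)=A^i\Delta(f^{-i}(x))$ licensed by \eqref{backstab}, the uniform bound from continuity on the compact $\X$, and geometric decay of $A^i$; existence by rerunning the proof of Theorem \ref{thexist} with the domination supplied by compactness and backward stability instead of Assumption \ref{hypconst}. Your remarks that compactness removes the $1/C_2$ constraint and that normality of $A$ is not needed (Gelfand) are exactly the observations the paper makes implicitly.
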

\begin{proof}
First, we prove that the continuous solution of \eqref{eqT} is unique.
Let $T_1,\ T_2: \X \to \R^m$ be two continuous solutions of \eqref{eqT}.
Let $x\in\X$. Then for all $i\in\N$,
\begin{align*}
T_1(x) - T_2(x)
&= (T_1-T_2)(f^i(f^{-i}(x)))\\
&= A^i (T_1-T_2)(f^{-i}(x)). \tag{from \eqref{eqT}}
\end{align*}
Since $\X$ is compact, satisfy \eqref{backstab} and $T_1$ and $T_2$ are continuous, there exists a constant $K>0$ such that
$|(T_1-T_2)(f^{-i}(x))| \leq K$ for all $i\in\N$.
Since moreover $\rho(A)<1$, $A^i (T_1-T_2)(f^{-i}(x)) \to 0$ as $i\to+\infty$.
Thus $T_1(x) - T_2(x) = 0$.

The existence of a continuous $T$ satisfying \eqref{eqT} follows from the Theorem \ref{thexist} and from the fact that Assumption \ref{hypconst} can be replaced in its proof by the fact that $\X$ is compact and backward stable\footnote{
Similarly, using the same trick, one can easily show that the hypothesis of \emph{globally Lipschitz} in Assumption \ref{hyp1} can be replaced in the proof of Theorem \ref{thinj} by the fact that $\X$ is compact and backward stable.
}. Indeed, the series \eqref{defT} still defines a continuous function since the domination
\begin{equation}
|a_i(x)| \leq \rho(A)^i ||B|| \sup_{\tilde{x}\in\X} h(\tilde{x})
\end{equation}
holds for all $x\in\X$ and can replace \eqref{eqdom}. Then one may apply the Lebesgue dominated convergence on $\X$.
\end{proof}

To conclude this section, recall that we have now at our disposal three theorems that ensures under different conditions on \eqref{syst} the existence, unicity and injectivity of a continuous map $T$ satisfying \eqref{eqT}.
In the next section, we illustrate on examples how to use those tools. In particular, we study systems with linear dynamics and polynomial output, and emphasize the link between the Luenberger observers developed in \cite{Andrieu} for continuous-time systems and the discrete-time observers developed in this paper for theirs first-order approximations.

\section{Examples}\label{secex}

\subsection{Linear dynamics with polynomial output}\label{seclinpol}

We consider first the system with linear dynamic and polynomial output of degree $d$
\begin{equation}
x_{k+1} = F x_k,\quad y_k = H P_d(x)
\end{equation}
with $P_d:\R^n\to\R^{k_d}$ a vector containing the $k_d$ possible monomials with degree less or equal than $d$, $F\in\R^{n\times n}$ and $H\in\R^{p \times k_d}$.
Then we have the following proposition.
\begin{proposition}
Let $m$ be a positive integer and $B\in\R^{m\times p}$.
There exists a subset $\SS$ of zero Lebesgue measure in $\R^{m\times m}$ such that for all $A \in\R^{m\times m}\setminus\SS$, there exists a function $T:\R^n\mapsto\R^m$
of the form
\begin{equation}
    T(x) = M P_d(x), \qquad \forall x\in\R^n
\label{Tpoly}
\end{equation}
for some $M\in\R^{m\times k_d}$, that satisfies \eqref{eqT} for any $x\in\R^n$.
\end{proposition}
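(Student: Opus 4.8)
The plan is to turn the functional equation \eqref{eqT} into a purely linear-algebraic \emph{Sylvester equation} for the unknown coefficient matrix $M$, and then to argue that this equation fails to be solvable only for $A$ in a set of zero measure.

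First I would exploit the polynomial structure. Because $F$ is linear, every entry of $P_d(Fx)$ is a monomial of degree at most $d$ in the coordinates of $Fx$, hence a polynomial of degree at most $d$ in $x$; writing it in the monomial basis collected in $P_d$ produces a matrix $\Phi\in\R^{k_d\times k_d}$, depending only on $F$ and $d$ (the matrix of the action induced by $F$ on polynomials of degree at most $d$), such that $P_d(Fx)=\Phi P_d(x)$ for all $x\in\R^n$. Substituting the ansatz $T(x)=MP_d(x)$ together with $f(x)=Fx$ and $h(x)=HP_d(x)$ into \eqref{eqT} yields
\[
M\Phi\,P_d(x) = (AM+BH)\,P_d(x),\qquad\forall x\in\R^n.
\]
Since the $k_d$ monomials gathered in $P_d$ are linearly independent as functions on $\R^n$, this identity holds for every $x$ if and only if the coefficient matrices coincide, that is,
\[
M\Phi - AM = BH .
\]

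Next I would recognise this as a Sylvester equation in $M$. Vectorising and using the Kronecker notation of the paper, it reads $(\T{\Phi}\otimes I_m - I_{k_d}\otimes A)\,\mathrm{vec}(M)=\mathrm{vec}(BH)$, and the coefficient operator has eigenvalues $\mu-\alpha$ with $\mu$ ranging over the eigenvalues of $\Phi$ and $\alpha$ over those of $A$. Consequently a solution $M$ exists (indeed a unique one) as soon as $A$ and $\Phi$ share no eigenvalue.

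It remains to control the exceptional set. The spectrum of $\Phi$ is a fixed finite set $\{\mu_1,\dots,\mu_r\}\subset\C$, and I would put
\[
\SS = \bigcup_{j=1}^{r}\setst{A\in\R^{m\times m}}{\det(A-\mu_j I_m)=0}.
\]
For each $j$ the map $A\mapsto\det(A-\mu_j I_m)$ is a polynomial in the entries of $A$ that is not identically zero (it equals $(c-\mu_j)^m\neq0$ at $A=cI_m$ for any $c\neq\mu_j$), so its zero locus has zero Lebesgue measure; when $\mu_j$ is non-real one simply splits this complex polynomial into its real and imaginary parts, at least one of which is a nonzero real polynomial, and the zero set of a nonzero real polynomial is Lebesgue-null. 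A finite union of null sets is null, so $\SS$ has zero measure, and for every $A\in\R^{m\times m}\setminus\SS$ the Sylvester equation is solvable, the resulting $M$ giving a map $T(x)=MP_d(x)$ that satisfies \eqref{eqT} on all of $\R^n$. The only delicate point is this last measure-theoretic step, namely handling the complex eigenvalues of $\Phi$ inside the real parameter space $\R^{m\times m}$; the reduction itself is elementary linear algebra.
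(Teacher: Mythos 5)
Your proof is correct and follows essentially the same route as the paper: reduce \eqref{eqT} to the Sylvester equation $M\Phi - AM = BH$ via the matrix $\Phi$ (the paper's $D$) representing the action of $F$ on $P_d$, and observe that solvability fails only when $A$ shares an eigenvalue with $\Phi$, a condition defining a Lebesgue-null set of matrices. Your treatment is in fact slightly more detailed than the paper's on the two points it leaves implicit, namely the equivalence with \eqref{eqT} via linear independence of the monomials and the measure-zero argument for complex eigenvalues of $\Phi$ inside $\R^{m\times m}$.
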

\begin{proof}
First, note that since $P_d(Fx)$ is a vector containing polynomials of $x$ with degree inferior to $d$, there exists a matrix $D\in\R^{k_d\times k_d}$ such that
\begin{equation}
P_d(Fx) = DP_d(x),\qquad \forall x\in\R^n.
\label{eqD}
\end{equation}
Since the set of eigenvalues of $D$ is finite, the spectra of $D$ and $-A$ are disjoint for almost all $A\in \R^{m\times m}$ \ie there exists a subset $\SS\subset\R^{m\times m}$ of zero Lebesgue measure such that the spectra of $D$ and $-A$ are disjoint for all $A\in \R^{m\times m}\setminus\SS$.
For such matrices $A$ the Sylvester equation
\begin{equation}
M D = A M + BH
\label{sylv}
\end{equation}
has a unique solution $M\in\R^{m\times k_d}$. Set $T$ as in \eqref{Tpoly}. It remains to check that \eqref{eqT} is satisfied for $f=F$ and $h=HP_d$. For all $x\in\R^n$,
\begin{align}
T(Fx)
&= M P_d(Fx)\tag{from \eqref{Tpoly}}\\
&= M D P_d(x)\tag{from \eqref{eqD}}\\
&= A M P_d(x) + BH P_d(x) \tag{from \eqref{sylv}}\\
&= AT(x) + BHP_d(x).\nonumber
\end{align}
\end{proof}
\begin{remark}
Note that the result is still true if $A$ and $B$ are complex matrices. Then $T$ takes complex values. The proof remains identical.
\end{remark}
\begin{remark}
Choose a set $\X_0\subset\R^n$ of initial condition and let $\X$ be as usual such that $X_k(x_0)\in\X$ for all $x_0\in\X_0$ and all $k\in\N$.
Note that if $F$ is invertible and if $\X$ is compact and backward stable, then the assumptions of the Theorem \ref{thunic} hold.
Assume also that Assumptions \ref{hypback} and \ref{hyp1} hold and apply Theorem \ref{thinj} with $m=(n+1)p$.
Then, for almost all $(\lambda_1, \dots,\lambda_{n+1})\in\C^{n+1}$,
and for complex matrices $A$ and $B$ as in Theorem \ref{thinj}, we have
\begin{equation}
T(x) = MP_d(x) = \sum_{i=0}^{+\infty} A^i B h(f^{-(i+1)}(x))
\end{equation}
for all $x\in\X$.
In particular, $T$ defined by \eqref{Tpoly} is injective.

\end{remark}

\subsection{Link with the continuous Luenberger observer}

In this section, we are interested in the link between the continuous Luenberger observer developed in \cite{Andrieu} for system \eqref{continu} and the discrete observer developed in the previous sections for a discrete-time version of \eqref{continu}.

\subsubsection{Continuous-time system}

We consider the following example with linear dynamic and polynomial output:
\begin{equation}
    \begin{cases}
    \dot x_1 = x_2\\
    \dot x_2 = - x_1
    \end{cases},\quad
    y = x_1^2 - x_2^2 + x_1 + x_2.
\label{continu2}
\end{equation}
It can be shown that this system is weakly differentially observable\footnote{
First, the map $(x_1^2-x_2^2, x_1+x_2) \mapsto (y, \ddot y)$ is injective.
Similarly, $(x_1x_2, x_1-x_2) \mapsto (\dot y, \dddot{y\hspace{0pt}})$ is also injective.
Combining those results, we get that $(x_1, x_2) \mapsto (y, \dot y, \ddot y, \dddot{y\hspace{0pt}})$ is injective.
}
of order $4$ on $\R^2$ in the sense of \cite[Definition 1]{Bernard}.
Following \cite{Bernard}, we seek $T_\lambda:\R\to\R^n$ such that
\begin{equation}
\frac{\dd}{\dd t}T_\lambda(x) = \lambda T_\lambda(x) + y
\label{eqTcont}
\end{equation}
for some $\lambda<0$. Since \eqref{continu2} has linear dynamic and polynomial output of degree 2, one can look for $T$ of the form
\begin{equation}
    T_\lambda(x) = x^*\begin{pmatrix}
    a& {c}/{2}\\
    {c}/{2}&b
    \end{pmatrix}x
    + \begin{pmatrix}d&e\end{pmatrix}x
\end{equation}
for some $(a, b, c, d, e)\in\R^5$. Then \eqref{eqTcont} holds if and only if
\begin{gather}
    -c = \lambda a + 1,\quad
    c = \lambda b-1,\quad
    2(a-b) = \lambda c,\nonumber\\
    -e = \lambda d + 1,\quad
    d = \lambda e + 1.
\label{eqabccont}
\end{gather}
The only solution of this equation is
\begin{gather}
    a = -\frac{\lambda}{4+\lambda^2},\quad
    b = \frac{\lambda}{4+\lambda^2},\quad
    c = -\frac{4}{4+\lambda^2},\nonumber\\
    d = \frac{1-\lambda}{1+\lambda^2},\quad
    e = -\frac{1+\lambda}{1+\lambda^2}.
\label{eqabccont2}
\end{gather}
Since $T_\lambda$ is stationary, one could believe that this function provide an observer that could be efficient even for a numerical approximation of \eqref{continu2}.
However, as we will see in the following, it is not the case: for a given discrete approximation of \eqref{continu2}, it is better to design an observer based on the discrete-time system rather than to use the one given by $T_\lambda$.

\subsubsection{Associated first-order discrete-time system}

For some discretization parameter $\dt>0$, the associated first-order approximation\footnote{
Since \eqref{continu2} is weakly differentially observable, it can be shown that \eqref{disc2} is backward distinguishable as soon as $\dt$ is small enough.
} of \eqref{continu2} is
\begin{equation}
\begin{cases}
x_1(k+1) = x_1(k) + \dt x_2(k)\\
x_2(k+1) = x_2(k) - \dt x_1(k)\\
y_k = x_1(k)^2-x_2(k)^2+x_1(k)+x_2(k)
\end{cases}.
\label{disc2}
\end{equation}
We seek a function $T^d_\lambda:\R\to\R^n$ satisfying a first-order approximation of \eqref{eqTcont} given by the Euler explicit method:
\begin{equation}
    T^d_\lambda(x(k+1)) = (1+\lambda \dt) T^d_\lambda(x(k)) + \dt y_k.
    \label{eqTdisc}
\end{equation}
Since $\lambda<0$, it is sufficient to choose $\lambda\dt>-2$ to have $-1<1+\lambda \dt<1$. Now, we seek $T^d_\lambda$ of the form
\begin{equation}
    T^d_\lambda(x) = x^*\begin{pmatrix}
    a'& {c'}/{2}\\
    {c'}/{2}&b'
    \end{pmatrix}x
    + \begin{pmatrix}
    d'&e'
    \end{pmatrix}x
\end{equation}
for some $(a', b', c', d', e')\in\R^5$. Then \eqref{eqTdisc} holds if and only if $(d', e')$ satisfy the same equation that $(d, e)$ in \eqref{eqabccont} and $(a', b', c')$ satisfy
\begin{equation}
\left\{
    \begin{aligned}
    &- c' + b'\dt = \lambda a' + 1,\\
    &c' + a'\dt = \lambda b' - 1,\\
    &2(a'-b') - c'\dt = \lambda c'.\\
    \end{aligned}
\right.
\label{eqabcdisc}
\end{equation}
Remark that this equation is the same than \eqref{eqabccont2} when $\dt = 0$. This is coherent with the fact that \eqref{disc2} is a discretization of \eqref{continu2}.
Then, the only solution of \eqref{eqabcdisc} is such that $(d', e') = (d, e)$ for all $\dt>0$ and $(a', b', c')$ converges to $(a, b, c)$ as $\dt$ goes to 0:
\begin{equation}
\left\{
    \begin{aligned}
    \displaystyle
    a' &= -\frac{\lambda+\dt}{4+(\lambda+\dt)^2},\\
    \displaystyle
    b' &= \phantom{-}\frac{\lambda+\dt}{4+(\lambda+\dt)^2},\\
    \displaystyle
    c' &= -\frac{4}{4+(\lambda+\dt)^2}.
    \end{aligned}\right.
\label{eqabcdisc2}
\end{equation}
For $\dt>0$, the discrete observer given by $T^d_\lambda$ is therefore different from the continuous observer given by $T_\lambda$, even if their difference goes to 0 as $\dt$ goes to 0.

\subsubsection{Comparison of the observers}

Consider a numerical simulation of the continuous-time system \eqref{continu2} obtained by the Euler explicit first-order method, which corresponds to the discrete-time system \eqref{disc2}.
Then the map $T^d_\lambda$ given by \eqref{eqTdisc} is much more adapted to the design of a numerically efficient observer than the function $T_\lambda$ given by \eqref{eqTcont} that has been designed for \eqref{continu2}. More generally, in order to implement an observer for a continuous-time varying system, it is better to develop a discrete-time observer based on the numerical approximation of the system, rather than a continuous-time observer based on the original system itself.

In order to highlight numerically this fact, we simulate the system \eqref{continu2} thanks to \eqref{disc2} and compare the accuracy of two observers: one based on functions of the form $T^d_\lambda$, and another based on functions of the form $T_\lambda$.
To obtain the observers, we fix $\dt>0$ and three arbitrary values $\lambda_i < 0$ satisfying $\lambda_i \dt>-2$ and use the fact that
\begin{equation}
    \begin{pmatrix}
    1 & 0 & 1 & 1\\
    a_1 & c_1 & d_1 & e_1\\
    a_2 & c_2 & d_2 & e_2\\
    a_3 & c_3 & d_3 & e_3
    \end{pmatrix}
    \begin{pmatrix}
    x_1^2 - x_2^2\\
    x_1 x_2\\
    x_1\\
    x_2
    \end{pmatrix}
    =
    \begin{pmatrix}
    y\\
    T_1(x)\\
    T_2(x)\\
    T_3(x)
    \end{pmatrix}.
    \label{eqmat}
\end{equation}
where $(a_i, c_i, d_i, e_i)$ is given by \eqref{eqabccont2} (resp. \eqref{eqabcdisc2}) with $\lambda=\lambda_i$ and $T_i = T_{\lambda_i}$ (resp. $T_i = T^d_{\lambda_i}$).
Fix the following parameters and initial conditions:
\begin{equation}
    \dt = 0.01,\ x(0) = (1, 0),\ \lambda_i = -10\times i,\ \xi^i(0) = 0.
\end{equation}
Then the $4\times4$ matrix defined in \eqref{eqmat} is invertible. Hence one can reconstruct an approximation $(\hat{x}_1, \hat{x}_2)$ of the state $(x_1, x_2)$ from the measurement $y$ and approximations of $T_i(x)$ given by the dynamic
$\xi^i_{k+1} = (1+\lambda_i\dt)\xi^i_k + \dt y_k$.\\

\begin{figure}
    \centering
    \epsfbox{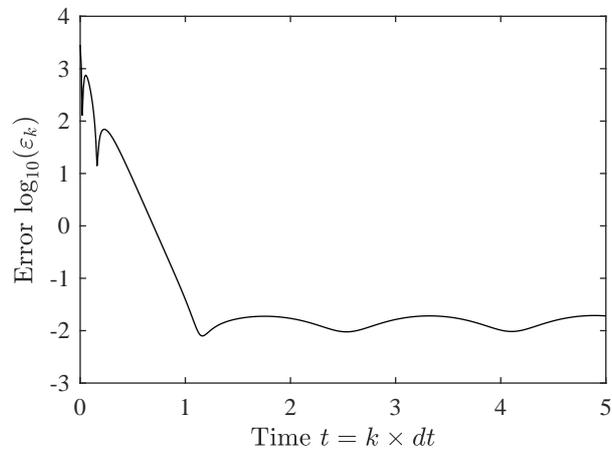}
    \caption{Evolution of the error between the state and the observer based on $T_{\lambda_i}$ in semi-log scale}      
    \label{figTc}
\end{figure}
\begin{figure}
    \centering
    \epsfbox{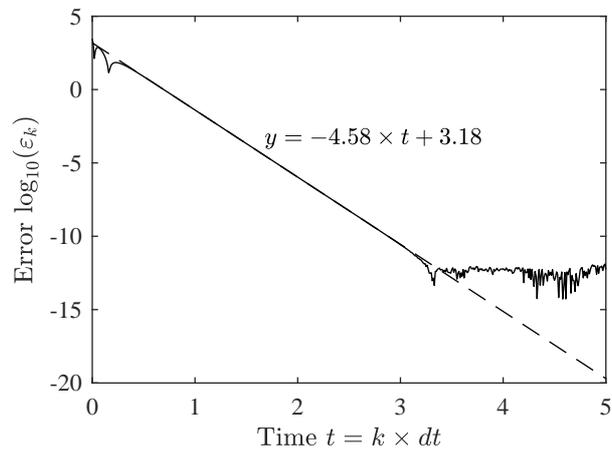}
    \caption{Evolution of the error between the state and the observer based on $T^d_{\lambda_i}$ in semi-log scale}
    \label{figTd}
\end{figure}

On Fig. \ref{figTc}, we plot on a semi-log scale the evolution of the absolute error $\varepsilon_k = \vert x_k-\hat x_k\vert$ between the state and its observer for $k \in \{0,\dots,500\}$ (\ie $t\in[0, 5]$) for the observer based on functions $T_{\lambda_i}$ designed for the original continuous-time system. Similarly, we make on Fig. \ref{figTd} the same plot but for the observer based on functions $T^d_{\lambda_i}$ designed for the discrete-time system. We clearly see that the observer based on $T^d_{\lambda_i}$ is much more efficient than the one based on $T_{\lambda_i}$.
On one hand, using $T^d_{\lambda_i}$, the error go to zero until it achieve $10^{-12}$, which is close to the machine epsilon $(\approx 10^{-16})$.
Moreover, the state observer seems to converge exponentially to the state, with a rate $r\approx -4.58$ (estimation based on a linear regression made on $[0.5, 3]$).
On the other hand, with $T_{\lambda_i}$, the observer does not converge to the state: it keeps an absolute error oscillating around $10^{-2}$.
This phenomenon is due to the fact that the trajectory of \eqref{disc2} is not invariant for this observer: even if it is well initialized (\ie $x(0) = \hat x(0)$), the observer will oscillate around the state.

\section{Conclusion}\label{secconclu}

We have shown how the initial Luenberger methodology can be applied to nonlinear discrete-time systems. It is based on the existence of a map satisfying some functional equation linked to the system, that transform the original system into a linear asymptotically stable one fed by the output. As soon as this map is uniformly injective, it allows us to estimate the state of the nonlinear system by simulating an autonomous system fed by the output and inverting this map. We stated sufficient conditions for the existence of such a map. In particular, we need the system to be reversible in time. Under a backward distinguishability hypothesis, we also proved that this map is injective.




\bibliographystyle{plain}

\bibliography{references}

\end{document}